\begin{document}
\begin{center}

\vskip 1.5cm {\large \bf  DECOMPOSITION OF AN INTEGER AS     \\
THE SUM OF TWO CUBES TO A FIXED MODULUS}

\vskip 1cm
\large

David Tsirekidze\\
Stanford University\\
USA \\
\href{mailto: david19@stanford.edu}{\tt david19@stanford.edu}\\
\vskip 1cm
Ala Avoyan\\
Tbilisi State University\\
Republic of Georgia\\
\href{mailto: a.avoyan@iset.ge}{\tt a.avoyan@iset.ge}\\
\end{center}
\ \\

\bigskip

\vspace*{0.25cm} \noindent {\small {\bf Abstract}. {The representation of any integer as the sum of two cubes to a fixed 
modulus is always possible if and only if the modulus is not divisible 
by seven or nine. For a positive non-prime integer $N$ there is given an
inductive way to find its remainders that can be represented as the sum 
of two cubes to a fixed modulus $N$. Moreover, it is possible to find 
the components of this representation.}

\vspace*{0.25cm}\noindent{\small {\bf AMS subject classification
(2000):} { 11A07, 11B50, 11D25,}}

\vspace*{0.25cm}\noindent{\small {\bf Keywords and phrases}:
{sum of two cubes; diophantine equation.} }

\newtheorem{theorem}{Theorem}[section]
\newtheorem{defin}{Definition}[section]
\newtheorem{remark}{Remark}[section]
\vskip 1 cm

%===================================================================================

\section{Introduction}\label{S1} 
Any odd prime number, $p$, can be written as the sum of two squares if and only if it is of the form $p = 4k+1$,  where $k \in N$. Generally, number $n$ can be represented as a sum of two squares if and only if in the prime factorization of $n$, every prime of the form $4k+3$ has even exponent. There is no such a nice characterization for the sum of two cubes. 
In this paper we give an inductive way which allows to find the representation of a non-prime integer as a sum of two cubes to a given modulus.\\
\begin{defin} 
$N\ge 2$ let
$$
\delta(N)=\frac{\#\{n\in\{1,\dots,N\}:n\equiv x^3+y^3 \pmod N
\text{\space has a solution}\}}{N}.
$$
\end{defin}

Broughan \cite{broughan} proved the following theorem\\

\begin{theorem}
1. If $7\mid N$ and $9\nmid N$ then $\delta(N)=5/7$\\
2. If $7\nmid N$ and $9\mid N$ then $\delta(N)=5/9$\\
3. If $7\mid N$ and $9\mid N$ then $\delta(N)=25/63$\\
4. If $7\nmid N$ and $9\nmid N$ then $\delta(N)=1$.\\
\end{theorem}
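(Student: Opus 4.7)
The plan is to reduce to prime-power moduli using the Chinese Remainder Theorem and then compute $\delta(p^k)$ for each prime power. If $\gcd(N_1,N_2)=1$ then by CRT a residue $a\pmod{N_1 N_2}$ is a sum of two cubes if and only if its reductions mod $N_1$ and mod $N_2$ both are; this makes $\delta$ multiplicative, i.e.\ $\delta(N_1 N_2)=\delta(N_1)\delta(N_2)$. Writing $N=\prod_p p^{e_p}$, the four cases of the theorem reduce to showing $\delta(p^k)=1$ unless $p=7$ (where $\delta(7^k)=5/7$) or $p=3$ with $k\ge 2$ (where $\delta(3^k)=5/9$); note that $\delta(3)=1$ since $x^3\equiv x\pmod 3$, so $3\mid N$ without $9\mid N$ contributes a factor of $1$.

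The base computations are finite checks: the cubes mod $7$ are $\{0,1,6\}$, so sums of two cubes fill out exactly $\{0,1,2,5,6\}$, giving $\delta(7)=5/7$; the cubes mod $9$ are $\{0,1,8\}$, so sums fill out exactly $\{0,1,2,7,8\}$, giving $\delta(9)=5/9$. For primes $p\neq 3,7$ I would split on the residue of $p$ modulo $3$: if $p\equiv 2\pmod 3$ then $\gcd(3,p-1)=1$, cubing is a bijection on $(\mathbb{Z}/p)^*$, every element of $\mathbb{Z}/p$ is itself a cube, and $\delta(p)=1$ trivially; if $p\equiv 1\pmod 3$ and $p\ge 13$ I would invoke the classical result (via Jacobi sums, or an explicit point count on the affine curve $x^3+y^3=a$) that every element of $\mathbb{F}_p$ is still a sum of two cubes.

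To pass from $\mathbb{Z}/p$ to $\mathbb{Z}/p^k$ when $p\neq 3$, I would apply Hensel's lemma: given $a\equiv x_0^3+y_0^3\pmod p$ with, say, $x_0$ a unit mod $p$, the partial derivative $3x_0^2$ is a unit (using $p\neq 3$), so the equation $x^3+y_0^3\equiv a$ in the variable $x$ lifts uniquely to $\mathbb{Z}/p^k$. The degenerate case $x_0\equiv y_0\equiv 0\pmod p$, which forces $a\equiv 0\pmod p$, is handled separately by using the one-parameter family $(t,-t)$ that identically gives $t^3+(-t)^3=0$ and then perturbing to hit any prescribed residue. This argument shows $\delta(p^k)=\delta(p)$ for $p\neq 3$, covering the $p=7$ case as well, and leaves only $p=3$, $k\ge 2$ to be treated by a direct induction lifting solutions from $3^k$ to $3^{k+1}$ via a cubic Taylor expansion $(x+3^{k-1}u)^3\equiv x^3+3^k x^2 u\pmod{3^{k+1}}$, valid whenever $3\nmid x$; the five residue classes $\{0,1,2,7,8\}\pmod 9$ are all accessible and no new obstructions appear beyond the one already detected mod $9$.

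The main obstacle, in my view, is the classical fact that every residue in $\mathbb{F}_p$ is a sum of two cubes when $p\equiv 1\pmod 3$ and $p\ge 13$; unlike the exceptional prime $p=7$, this is not a finite check and requires genuine number-theoretic input (character sums or the Hasse--Weil bound applied to $x^3+y^3=a$). A secondary difficulty is the lifting from $\mathbb{Z}/3^k$ to $\mathbb{Z}/3^{k+1}$, where Hensel's lemma fails because $3$ is not a unit, so the inductive step has to be carried out by hand, keeping careful track of the $3$-adic valuations $v_3(x)$ and $v_3(y)$ of the candidate solutions and separating the case $v_3(a)\ge 2$ (where the family $(t,-t)$ plus a small perturbation must be used) from the case where $a$ is a unit modulo $3$.
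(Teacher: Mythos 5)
Your outline is sound, but note that the paper does not actually prove this statement: Theorem 1.1 is quoted from Broughan's \emph{Characterizing the Sum of Two Cubes} and used as a black box, so there is no in-paper proof to compare against. What the paper does prove on its own (Theorem 2.2) is exactly your first step, the multiplicativity $\delta(MN)=\delta(M)\delta(N)$ for $(M,N)=1$ via CRT; from there the paper stops, whereas you carry the reduction through to prime powers. Your prime-power analysis is essentially the standard (and Broughan's) route: finite checks at $7$ and $9$, bijectivity of cubing for $p\equiv 2\pmod 3$, the Hasse--Weil/Jacobi-sum input for $p\equiv 1\pmod 3$, $p\ge 13$ (which, as you say, is the one genuinely non-elementary ingredient --- the bound $p+1-2\sqrt p-3>0$ isolates $p=7$ as the unique exception), and Hensel lifting for $p\ne 3$. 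Two small simplifications you could make: the ``degenerate'' case $a\equiv 0\pmod p$ needs no perturbation family, since $0\equiv 1^3+(-1)^3$ already gives a base point with a unit coordinate to which Hensel applies directly; and for $p=3$, $k\ge 2$, rather than a hand-rolled induction you can use that the unit cubes in $(\mathbb{Z}/3^k)^{*}$ are exactly the units congruent to $\pm 1\pmod 9$ (index $3$ subgroup), from which membership of $a$ in the image of $x^3+y^3$ reduces immediately to $a\bmod 9\in\{0,\pm 1,\pm 2\}$, giving $\delta(3^k)=5/9$ without tracking valuations. With those details filled in, your argument is a complete proof of a theorem the paper only cites.
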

In the last case $\delta(N)=1$ , therefore, any integer can be represented as a sum of two cubes to a fixed modulus $N$.

By the Theorem 1.1 for all $N$ we can deduce number of its remainders that can be decomposed as a sum of two cubes. In this paper we introduce the way to find these remainders and also their decompositions as a sum of two cubes to a fixed modulus $N$ in case when we know the factorization of this number.

%===================================================================================
\section{Main Results}\label{S2} 
\begin{theorem}
Let us consider an equation $n\equiv u^3+v^3 \pmod N$, $n \in [0,N-1]$. Then it has solution in Z in the following congruences:\\ 
1. $7\mid N$, $9\nmid N$ and $n \equiv 0, 1, 2, 5, 6 \pmod 7$; \\
2. $7\nmid N$, $9\mid N$ and $n \equiv 0, 1, 2, 7, 8 \pmod 9$; \\
3. $7\mid N$, $9\mid N$ and $n \equiv 0, 1, 2, 7, 8, 9, 16, 19, 20, \\
26, 27, 28, 29, 34, 35, 36, 37, 43, 44, 47, 54, 55, 56, 61, 62 \pmod {63}$; \\
4.  $7\nmid N$ and $9\nmid N$.
\end{theorem}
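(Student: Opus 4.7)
The plan is to leverage Theorem 1.1 together with a direct computation of cubes modulo $7$ and modulo $9$, then glue via the Chinese Remainder Theorem for the mixed case.

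First I would tabulate the image of the cube map. Either by enumeration or by noting that on the cyclic groups $(\mathbb{Z}/7\mathbb{Z})^*$ and $(\mathbb{Z}/9\mathbb{Z})^*$, both of order $6$, the cube map has kernel of order $\gcd(3,6)=3$ and thus image of order $2$, one obtains
\[
\{x^3\bmod 7\}=\{0,1,6\},\qquad\{x^3\bmod 9\}=\{0,1,8\}.
\]
Taking pairwise sums gives
\[
\{x^3+y^3\bmod 7\}=\{0,1,2,5,6\},\qquad\{x^3+y^3\bmod 9\}=\{0,1,2,7,8\}.
\]
This already proves the necessity direction in cases 1, 2, 3: if $N$ is divisible by $7$ (resp.\ $9$, resp.\ both), then reducing the congruence $n\equiv u^3+v^3\pmod N$ modulo $7$ (resp.\ $9$, resp.\ both via CRT) shows that $n$ must lie in the residue sets listed.

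For the sufficiency direction I would appeal to a cardinality argument based on Theorem~1.1 rather than constructing $(u,v)$ explicitly. Let $R_N\subseteq\{0,1,\dots,N-1\}$ denote the set of representable residues and $S_N$ the set singled out by the theorem. In case 1, $7\mid N$ gives $|S_N|=5\cdot(N/7)=5N/7$, and Theorem~1.1 gives $|R_N|=\delta(N)\cdot N=5N/7$. The necessity step shows $R_N\subseteq S_N$, so equality follows from matching cardinalities. Cases 2 and 3 go the same way, using $63\mid N$ in case 3 and the count $|S_N|=25\cdot(N/63)=25N/63=\delta(N)\cdot N$. Case 4 is immediate from $\delta(N)=1$.

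The only real obstacle is a bookkeeping step in case 3: I must verify that the $25$ residues explicitly listed modulo $63$ are exactly those $n\in\{0,\dots,62\}$ with $n\bmod 7\in\{0,1,2,5,6\}$ and $n\bmod 9\in\{0,1,2,7,8\}$. This is a finite CRT check on $5\times 5=25$ residue pairs, which I would record in a short table; once the list is certified, the counting argument above closes the proof. Note that this argument produces existence non-constructively; the promised algorithm to exhibit $(u,v)$ is relegated to the later sections that apply CRT to the known factorization of $N$.
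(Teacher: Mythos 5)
Your proposal is correct and follows essentially the same route as the paper: compute the image of the cube map modulo $7$ and $9$ to get the necessary residue classes, then use the density count from Theorem~1.1 to conclude that every such residue is in fact representable. The paper only writes out case 1 and says the rest is analogous, whereas you spell out the group-theoretic reason for the image size and the CRT bookkeeping in case 3, but the underlying argument is identical.
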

\begin{proof}
For simplicity, we prove only the first case of the theorem. One can easily verify that cube of any integer number can have the following remainders modulo 7: 0, 1, 6. Therefore, the sum of two cubes can have remainders 0, 1, 2, 5, 6 modulo 7. The number of positive integers with these remainders is $(5/7) \cdot N$ in the interval $[0, N-1]$. There is no other number $n$ for which the equation has a solution. Hence, from Theorem 1.1 the first case of Theorem 2.1 is proved. Other two cases can be proved analogously. 
\end{proof}

\begin{defin}
Let us denote the set of all values of $n$ for which $n\equiv u^3+v^3 \pmod N$ by $A(N)$.
\end{defin}

\begin{theorem}
If $(N,M)=1$, then $\delta\ (MN)=\delta(M)\cdot\delta (N)$
\end{theorem}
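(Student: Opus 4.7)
The plan is to use the Chinese Remainder Theorem to produce a bijection between $A(MN)$ and $A(M) \times A(N)$, which gives the multiplicativity of $\delta$ after dividing by the sizes of the residue systems.

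First I would set up the CRT isomorphism $\mathbb{Z}/MN\mathbb{Z} \cong \mathbb{Z}/M\mathbb{Z} \times \mathbb{Z}/N\mathbb{Z}$, which holds because $\gcd(M,N)=1$. The key observation is that cubing is a polynomial operation and therefore commutes with this ring isomorphism: if $u$ corresponds to $(u_M, u_N)$ under CRT, then $u^3$ corresponds to $(u_M^3, u_N^3)$.

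Next I would prove that $n \in A(MN)$ if and only if $n \bmod M \in A(M)$ and $n \bmod N \in A(N)$. The forward direction is immediate by reducing a representation $n \equiv u^3 + v^3 \pmod{MN}$ modulo $M$ and modulo $N$ separately. For the reverse direction, I would start from representations $n \equiv u_M^3 + v_M^3 \pmod{M}$ and $n \equiv u_N^3 + v_N^3 \pmod{N}$, then use CRT to choose $u, v \in \mathbb{Z}/MN\mathbb{Z}$ with $u \equiv u_M,\ v \equiv v_M \pmod{M}$ and $u \equiv u_N,\ v \equiv v_N \pmod{N}$. By the compatibility noted above, $u^3+v^3 \equiv n$ both mod $M$ and mod $N$, hence mod $MN$.

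Finally, this equivalence combined with CRT shows that the number of $n \in \{0,\ldots,MN-1\}$ satisfying the joint condition is exactly $|A(M)| \cdot |A(N)|$, so $|A(MN)| = |A(M)| \cdot |A(N)|$. Dividing by $MN = M \cdot N$ yields $\delta(MN) = \delta(M) \delta(N)$. There is no real obstacle here; the only point requiring care is the verification that cubes lift correctly through CRT, which is transparent once one recalls that CRT is a ring isomorphism.
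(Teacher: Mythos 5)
Your proof is correct and rests on the same idea as the paper's: the Chinese Remainder Theorem yields a bijection between $A(MN)$ and $A(M)\times A(N)$, whence $|A(MN)|=|A(M)|\cdot|A(N)|$ and the multiplicativity of $\delta$. The paper simply unfolds the same argument explicitly—writing the CRT lift as $A=Xx-(X-1)u$, $B=Xy-(X-1)v$ with $X\equiv 0\pmod M$, $X\equiv 1\pmod N$, and checking $X^{*}\equiv A^{3}+B^{3}\pmod{MN}$ by direct computation—because it needs those explicit formulas for the constructive algorithm in the Remark that follows.
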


\begin{proof}
\textrm{Suppose}
\begin{equation}
m\equiv u^3+v^3 \pmod M \          \ m \in [0,M-1] \\
\end{equation}
\begin{equation}
n\equiv x^3+y^3 \pmod N \          \ n \in [0,N-1] \\
\end{equation}

Let $X$ be such that $M\mid X$ and $N\mid X-1$. By the Chinese Remainder Theorem such $X$ always exists.

Let us construct $X^{*}, A$ and $B$ by the following way
\begin{equation}
X^{*}\equiv X\cdot n - (X-1)\cdot m\pmod {MN} 
\end{equation}
\begin{equation}
A = X\cdot x -(X-1)\cdot u 
\end{equation}
\begin{equation}
B = X\cdot y -(X-1)\cdot v 
\end{equation}

It can be shown that $X^{*}\equiv A^3+B^3\pmod {MN}$.\\
Indeed, 
$$
X^{*}-(A^{3}+B^{3})\equiv
$$
$$
\equiv X\cdot n-(X-1)\cdot m-(X^{3}\cdot x^{3}-(X-1)^{3}\cdot u^{3}+X^{3}\cdot y^{3}-(X-1)^{3}\cdot v^{3})\equiv \\
$$
$$
\equiv X\cdot n-(X-1)\cdot m -(X^{3}(x^{3}+y^{3})-(X-1)^{3}(u^{3}+v^{3}))\equiv \\
$$
$$
\equiv X\cdot (n-X^{2}( x^{3}+y^{3}))+(X-1)\cdot ((X-1)^{2}(u^{3}+v^{3})-m)\pmod{MN} \\ 
$$
As, 
$$
n-X^{2}(x^{3}+y^{3})\equiv (x^3+y^3)(1-X)(1+X) \equiv 0\pmod N \ \textrm{and} \ X\equiv 0\pmod M \\
$$
And $(N,M)=1$, we obtain \\
$$
X\cdot (n-X^{2}( x^{3}+y^{3}) )\equiv 0\pmod {MN}\\
$$
Analogously,\\
$$
(X-1)^{2}(u^{3}+v^{3})-m\equiv (u^{3}+v^{3})\cdot ((X-1)^{2}-1)\equiv 0\pmod M \ \textrm{and} \\ \ X-1 \equiv 0\pmod N\\
$$
Consequently, as $(N,M)=1$ \\
$$
(X-1)\cdot((X-1)^{2}(u^{3}+v^{3})-m)\equiv 0 \pmod {MN}\\
$$
Finally, \\
$$
X^{*}-(A^{3}+B^{3})\equiv X\cdot (n-X^{2}(x^{3}+y^{3}))+(X-1)\cdot((X-1)^{2}(u^{3}+v^{3})-m) \equiv 0\pmod {MN}\\
$$
\vskip 1cm
For any $m\in A(M)$ and any $n\in A(N)$, there exists $X^{*}\in A(MN)$.
Obviously, $X^{*}\equiv n\pmod N$ and $X^{*}\equiv m\pmod M$. Thus, for different
pairs $(m_{1},n_{1})$ and $(m_{2},n_{2})$ we cannot obtain the same $X^{*}$
(by Chinese Remainder Theorem).\\
Now take any element from $A(MN)$ set,
$X^{*}\equiv A^{3}+B^{3}\pmod{MN}$. Suppose $(x,y), (u,v)$ pairs are the solutions of
the following Diophantine equation:
$$
A=X\cdot x-(X-1)\cdot u
$$
$$
B=X\cdot y-(X-1)\cdot v.
$$
If we consider 
\begin{center}
$m\equiv(u^{3}+v^{3})\pmod M \ and \ n\equiv(x^{3}+y^{3})\pmod N$.
\end{center}
Then $X^{*}\equiv A^{3}+B^{3}\pmod{MN}$. Therefore, there is one-to-one correspondence between the elements of the set $A(MN)$ and pairs of elements from the sets $A(M)$ and $A(N)$. Hence, we proved that $\delta(MN)=\delta(M)\cdot\delta(N)$
\end{proof}

\begin{remark}
Let us assume we are given any number $K$ and suppose we know the representation of any element in each set $A(1), A(2),...,A(K-1)$ as a sum of two cubes to a fixed modulus. And our task is to find the representation of the elements of $A(K)$. Let K be a non-prime number and $K=M \cdot N$, where $(M,N)=1$ and $N,M>1$. Suppose $m \in A(M)$, $n \in A(N)$ and (1),(2) hold.
Solve Diophantine equation $M\cdot q - N \cdot l=1$, let $X=Mq$ and construct $X^{*}, A, B$ according to (3),(4),(5). As it was shown above 
\begin{equation}
X^{*}\equiv A^3+B^3\pmod K
\end{equation}
Therefore $X^{*}\in A(K)$ and (6) is the representation for $X^{*}$ as a sum of two cubes to a fixed modulus $K$.
\end{remark}

%======================================================================================================

\section{Conclusion}\label{S3} 

This paper is an attempt to explicitly find the way to solve equation $n \equiv a^3+b^3 \pmod K$. Using inductive way that is given in this paper it is possible to construct $A(K)$ set and represent any element of this set as a sum of two cubes to a fixed non-prime modulus $K$. For further research this issue can be considered for prime number $K$.

\bigskip

\end{document}